\DeclareMathOperator{\kew}{Skew}
\DeclareMathOperator{\ym}{Sym}
\DeclareMathOperator{\Span}{span}
\begin{document}

\newcommand{\A}{\cal{A}}
\newcommand{\F}{\cal{F}}
\newcommand{\R}{\mathbb{R}}
\newcommand{\cH}{\cal{H}}
\newcommand{\T}{\cal{T}}
\newcommand{\Z}{\cal{Z}}
\newcommand{\J}{\cal{J}}
\newcommand{\cS}{\cal{S}}
\newcommand{\M}{\cal{M}}
\newcommand{\K}{\cal{K}}
\newcommand{\C}{\mathbb{C}}
\newcommand{\U}{\cal{U}}
\newcommand{\B}{\cal{B}}
\newcommand{\V}{\cal{V}}
\newcommand{\cL}{\cal{L}}
\newcommand{\W}{\cal{W}}
\newcommand{\FF}{\mathbb F}
\newcommand{\KK}{\mathbb F_0}
\newcommand{\Q}{\mathbb Q}
\newcommand{\N}{\mathbb N}
\newcommand{\wh}[1]{\widehat{#1}}
\newcommand{\rsl}{\mathfrak{sl}}
\newcommand{\wt}{\widetilde}
\newcommand{\ov}{\overline}
\newcommand{\kh}{\mathcal{K}(\mathcal{H})}

\newcommand{\Skew}{{\kew\FF\langle\bar X,\bar X^*\rangle}}
\newcommand{\Sym}{{\ym\FF\langle\bar X,\bar X^*\rangle}}

\newcommand{\ax}{\langle\bar X\rangle}

\newtheorem{theorem}{Theorem}[section]
\newtheorem{proposition}[theorem]{Proposition}
\newtheorem{lemma}[theorem]{Lemma}
\newtheorem{corollary}[theorem]{Corollary}

\theoremstyle{definition}
\newtheorem{remark}[theorem]{Remark}
\newtheorem{definition}[theorem]{Definition}
\newtheorem{example}[theorem]{Example}
\newtheorem{examples}[theorem]{Examples}
\newtheorem{question}[theorem]{Question}

\newcommand\cal{\mathcal}

\linespread{1.05}

\def\bh{\mathcal B(\mathcal H)}
\title[Values of Noncommutative Polynomials]{A Note on Values of Noncommutative Polynomials}

\author[Matej Bre\v sar and Igor Klep]{Matej Bre\v sar${}^1$ and Igor Klep${}^2$}

\address{ Faculty of Mathematics and Physics, University
of Ljubljana, and \\
  Faculty of Natural Sciences and Mathematics, University of Maribor, Slovenia	}
\email{matej.bresar@fmf.uni-lj.si}
\email{igor.klep@fmf.uni-lj.si}
\thanks{${}^1$Supported by the Slovenian Research Agency (program No. P1-0288).}
\thanks{${}^2$Supported by the Slovenian Research Agency (program No. P1-0222).}
\subjclass[2000]{08B20, 16R99, 47L30}
\date{09 September 2009}
\keywords{Noncommutative polynomial, Lie ideal, Hilbert space, bounded operator, compact operator}

\begin{abstract}
We find a class of algebras $\A$ satisfying the following property: 
for every nontrivial
noncommutative polynomial $f(X_1,\ldots,X_n)$, the linear span
of all its values $f(a_1,\ldots,a_n)$, $a_i\in \A$, 
equals $ \A$. This class includes the algebras 
of all bounded and  all compact
operators  on an infinite dimensional  Hilbert space.
\end{abstract}

\maketitle
\section{Introduction}

Starting with Helton's seminal paper \cite{Hel} there has been considerable interest  over the last years  in values of noncommuting polynomials on matrix algebras. In one of the papers in this area 
the second author and Schweighofer \cite{KS} showed  that  Connes' embedding conjecture is  equivalent  to a certain algebraic assertion which involves the trace of polynomial values on matrices.     
This has motivated us \cite{BK} to consider the linear span of values of a noncommutative polynomial $f$ on the matrix algebra $M_d(\FF)$; here, $\FF$ is a field with char$(\FF) =0$. It turns out \cite[Theorem 4.5]{BK} that this span can be either:
\begin{itemize}
\item[(1)]
$\{0\}$;
\item[(2)] the set of all scalar matrices;
\item[(3)] the set of all trace zero matrices; or 
\item[(4)] the whole algebra $M_d(\FF)$.
\end{itemize}
\noindent From the precise statement of this theorem it also follows that if 
$2 d> \deg f$, then (1) and (2) do not
occur
and (3) occurs only when $f$ is a sum of commutators.

What to except in infinite dimensional analogues of  $M_d(\FF)$? More specifically, let  $\cH$ be infinite dimensional Hilbert space, and let $\bh$ and $\kh$ denote the algebras of 
all bounded and compact linear operators on $\cH$, respectively. What is 
 the linear span of polynomial values in $\bh$ and $\kh$? 
 A very special (but decisive, as we shall see) case of this question was settled by Halmos \cite{Hal} and Pearcy and Topping \cite{PT} (see also Anderson \cite{And})
 a long time ago:  every operator in $\bh$ and $\kh$, respectively, is a sum of commutators. That is, the linear span of values of the polynomial
$X_1X_2-X_2X_1$ on $\bh$ and $\kh$ is the whole $\bh$ and $\kh$, respectively.
We will prove that the same is true for \emph{every} nonconstant polynomial. This result will be derived as a corollary of our main theorem which presents a class of algebras with the property that the span of values of ``almost" every polynomial is equal to the whole algebra.

\section{Results}

By $\FF\langle \bar{X}\rangle$ we denote the free algebra over a field $\FF$ generated by $\bar{X} = \{X_1,X_2,\ldots\}$, i.e., the algebra of all noncommutative polynomials in $X_i$. Let $f = f(X_1,\ldots,X_n)\in \FF\langle \bar{X}\rangle$. We say that $f$ is {\em homogeneous} in the variable $X_i$ if all monomials of $f$ have the same degree in $X_i$. If this degree is $1$, then we say that $f$ is {\em linear} in $X_1$. If $f$ is linear in every variable $X_i$, $1\leq i \le n$, then we say that $f$ is {\em multilinear}.

Let  $\A$ be an algebra over $\FF$.
By $f(\A)$ we denote the set of all values $f(a_1,\ldots,a_n)$ with $a_i\in \A$, $i=1,\ldots,n$. Recall that  $f = f(X_1,\ldots,X_n)\in \FF\langle \bar{X}\rangle$ is said to be an {\em identity} of $\A$ if $f(\A)=\{0\}$. If $f(\A)$ is contained in the center of $\A$, but $f$ is not an identity of $\A$,
then $f$ is said to be a {\em central polynomial} of $\A$.    By $\Span f(\A)$ we denote the linear span of $f(\A)$. We are interested in the question when does $\Span f(\A)=\A$ hold.

For the proof of our main theorem 
three rather  elementary lemmas will be needed.
The first and the simplest one is a slightly simplified version of \cite[Lemma 2.2]{BK}. Its proof is based on the standard Vandermonde argument.

\begin{lemma} \label{R1}
Let $\V$ be a vector space over an infinite field $\FF$, and let $\U$ be a subspace. Suppose that $c_0,c_1,\ldots,c_n\in \V$ are such that
$
\sum_{i=0}^n \lambda^i c_i \in \U
$
for all $\lambda\in\FF$. Then each $c_i\in \U$.
\end{lemma}

Recall that a vector subspace $\cL$ of $\A$ is said to be a {\em Lie ideal} of $\A$ if $[\ell,a]\in \cL$ for all
$\ell\in \cL$ and $a\in \A$; here, $[u,v] = uv - vu$.    For a recent treatise of Lie ideals from an algebraic
as well as functional analytic viewpoint we refer the reader to 
\cite{BKS}.

 Our second lemma is a special case of \cite[Theorem 2.3]{BK}.

\begin{lemma} \label{L1}
Let $\A$ be an algebra over  an infinite field  $\FF$, and let $f  \in \FF\langle \bar{X}\rangle$. Then $\Span f(\A)$ is a  Lie ideal of $\A$.
\end{lemma}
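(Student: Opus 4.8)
The plan is to deduce the Lie ideal property from Lemma~\ref{R1} by exhibiting, for fixed $a_1,\dots,a_n,b\in\A$, the commutator $[f(a_1,\dots,a_n),b]$ as one of the coefficients of a polynomial in a scalar parameter $\lambda$ all of whose values lie in $f(\A)$. Since $\Span f(\A)$ is spanned by the values $f(a_1,\dots,a_n)$ and $[\,\cdot\,,b]$ is linear, it suffices to show that $[f(a_1,\dots,a_n),b]\in\Span f(\A)$ for all $a_i,b\in\A$.

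So fix these elements and, for $\lambda\in\FF$, put
\[
q(\lambda):=f\bigl(a_1+\lambda[b,a_1],\,a_2+\lambda[b,a_2],\,\dots,\,a_n+\lambda[b,a_n]\bigr)\in\A .
\]
Expanding each monomial of $f$ we may write $q(\lambda)=\sum_{k\ge 0}\lambda^k c_k$ with $c_k\in\A$ (a finite sum, and $c_0=f(a_1,\dots,a_n)$). The one computation that must be carried out is the identification of the linear coefficient $c_1$: reading off the coefficient of $\lambda$ monomial by monomial, $c_1$ is the sum, over all monomials of $f$ and all occurrences of a variable in each of them, of the element obtained by replacing that single occurrence of $a_j$ by $[b,a_j]$ and leaving everything else untouched. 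For a single monomial $X_{j_1}\cdots X_{j_m}$ this sum telescopes,
\[
\sum_{p=1}^{m}a_{j_1}\cdots a_{j_{p-1}}\,[b,a_{j_p}]\,a_{j_{p+1}}\cdots a_{j_m}
= b\,a_{j_1}\cdots a_{j_m}-a_{j_1}\cdots a_{j_m}\,b ,
\]
and summing over the monomials of $f$, weighted by their coefficients (a possible constant term of $f$ contributes nothing, since it commutes with $b$), gives $c_1=bf(a_1,\dots,a_n)-f(a_1,\dots,a_n)b=-[f(a_1,\dots,a_n),b]$.

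It then only remains to observe that $a_j+\lambda[b,a_j]\in\A$ for every $\lambda\in\FF$, so that $q(\lambda)\in f(\A)\subseteq\Span f(\A)$ for all $\lambda$; applying Lemma~\ref{R1} with $\V=\A$ and $\U=\Span f(\A)$ yields $c_k\in\Span f(\A)$ for every $k$, in particular $[f(a_1,\dots,a_n),b]=-c_1\in\Span f(\A)$, as wanted. The argument is purely formal, and the only point requiring a little care is the bookkeeping behind the telescoping identity for $c_1$; conceptually the content is simply that the commutator of a value of $f$ with $b$ is, up to sign, the first-order term of an $\FF$-parametrized family of values of $f$, after which Lemma~\ref{R1} finishes the job.
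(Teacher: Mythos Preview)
Your argument is correct. The telescoping identity is just the derivation property of the inner derivation $D=[b,\cdot]$: for any product one has $D(a_{j_1}\cdots a_{j_m})=\sum_{p}a_{j_1}\cdots D(a_{j_p})\cdots a_{j_m}$, so $c_1=[b,f(a_1,\dots,a_n)]$ as you claim, and Lemma~\ref{R1} then gives $c_1\in\Span f(\A)$.

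As for the comparison: the paper does not actually prove Lemma~\ref{L1} but simply records it as a special case of \cite[Theorem~2.3]{BK}. Your proof is a clean self-contained argument that needs only the Vandermonde-type Lemma~\ref{R1} (itself quoted from \cite{BK}), and it exploits the specific structure of the commutator with a fixed $b$ via the substitution $a_j\mapsto a_j+\lambda[b,a_j]$; this is a pleasant shortcut compared with invoking the more general result in \cite{BK}. The only cosmetic point is the remark about a possible constant term of $f$: for that sentence to make sense one needs $\A$ unital (so that the scalar can be interpreted in $\A$), but this is harmless and in any case irrelevant to the Lie ideal assertion, since a constant contributes nothing to $c_1$.
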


Every vector subspace of the center of $\A$ is obviously a Lie ideal of $\A$. Lie ideals that are not contained in the center are called {\em noncentral}. The third lemma follows from an old  result of Herstein \cite[Theorem 1.2]{Her}.

\begin{lemma} \label{L2}
Let $\cS$ be a simple algebra over  a field  $\FF$ with {\rm char}$(\FF)\ne 2$. If $\M$ is both a noncentral  Lie ideal of $\cS$ and a subalgebra of $\cS$, then $\M = \cS$.
\end{lemma}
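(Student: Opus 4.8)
The plan is to obtain Lemma~\ref{L2} as a direct consequence of Herstein's theorem \cite[Theorem 1.2]{Her}, which asserts that a subring of a simple ring $\cS$ with $\Char(\FF)\neq 2$ that is at the same time a Lie ideal must be contained in the center of $\cS$ or must coincide with $\cS$. Granting this, our lemma is merely the contrapositive reading: since $\M$ is a noncentral Lie ideal as well as a subalgebra, the first alternative is ruled out, so $\M=\cS$.

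If one prefers to argue from scratch, I would proceed as follows. Noncentrality of $\M$ provides $m\in\M$ and $s\in\cS$ with $[m,s]\neq 0$; since $\M$ is a Lie ideal, every $[m,s]$ with $m\in\M$, $s\in\cS$ lies in $\M$, so their $\FF$-span $W$ is a nonzero subspace of $\M$. The aim is to show that $\M$ contains the two-sided ideal of $\cS$ generated by $W$; as $\cS$ is simple and $W\neq 0$, that ideal is all of $\cS$, whence $\M=\cS$. The first step is that $\M$ already absorbs $W$ on both sides: for $m,n\in\M$ and $s\in\cS$ one has $[m,s]n=[mn,s]-m[n,s]$, where $[mn,s]\in\M$ because $mn\in\M$ and $\M$ is a Lie ideal, and $m[n,s]\in\M$ because $m,[n,s]\in\M$ and $\M$ is a subalgebra; thus $W\M\subseteq\M$, and symmetrically $\M W\subseteq\M$.

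The main obstacle is to pass from closure under multiplication by $W$ to closure under multiplication by all of $\cS$, i.e.\ to the fact that the full two-sided ideal generated by $W$ lands inside $\M$. Here the naive commutator manipulations stall: transporting an arbitrary $s\in\cS$ past an element of $W$ reintroduces a product of the form $s\cdot W$, exactly what one is trying to control. Overcoming this is the real substance of Herstein's theorem; it calls for his more careful analysis --- linearizing identities such as $uru-ru^2\in\M$ (which holds for $u\in\M$, $r\in\cS$ since $u^2\in\M$) and exploiting $\Char(\FF)\neq 2$ to disentangle the resulting terms. For this reason the economical route, and the one I would take in the paper, is simply to invoke \cite[Theorem 1.2]{Her}.
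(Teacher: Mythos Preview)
Your proposal is correct and matches the paper's approach exactly: the paper does not give an independent argument but simply records that Lemma~\ref{L2} follows from \cite[Theorem 1.2]{Her}, precisely as you do in your first paragraph. Your additional remarks sketching a direct proof are fine as commentary, but they go beyond what the paper provides and are not needed.
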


We are now in a position to prove our main result.

\begin{theorem} \label{T}
Let $\cS$ and $\B$ be algebras over a field  $\FF$ with {\rm char}$(\FF)=0$, and let $\A = \cS\otimes \B$. Suppose that $\cS$ is simple, and suppose that $\B$ satisfies 
\begin{itemize} \item[(a)] every element in $\B$ is a sum of commutators; and
\item[(b)] for each $n\ge 1$, every  element in $\B$ is a linear combination of elements $b^n$, $b\in\B$.
\end{itemize} If $f\in \FF\langle \bar{X}\rangle$ is neither an identity nor a central polynomial of $\cS$, then $$\Span f(\A)=\A.$$ 
\end{theorem}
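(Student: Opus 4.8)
The plan is to prove $\Span f(\A)=\A$ by showing that the Lie ideal $\cL:=\Span f(\A)$ of $\A$ (Lemma~\ref{L1}) contains every simple tensor $s\otimes b$. First I would split $f$ into its homogeneous components $f=\sum_d f^{(d)}$, where $f^{(d)}$ collects the monomials of total degree $d$; the degree-$0$ part lies in the center of $\cS$ and plays no role (and for the non-unital $\A$ relevant in the applications, $f$ has no constant term at all). For $s_1,\ldots,s_n\in\cS$, $b\in\B$ and $\lambda\in\FF$, each monomial $X_{i_1}\cdots X_{i_d}$ of $f^{(d)}$ sends $(s_{i_1}\otimes\lambda b,\ldots)$ to $\lambda^d\,(s_{i_1}\cdots s_{i_d})\otimes b^d$, so
\[
 f(s_1\otimes\lambda b,\ldots,s_n\otimes\lambda b)\;=\;\sum_d\lambda^d\bigl(f^{(d)}(s_1,\ldots,s_n)\otimes b^d\bigr)\;\in\;\cL .
\]
Since this holds for all $\lambda$ and $\FF$ is infinite, Lemma~\ref{R1} gives $f^{(d)}(s_1,\ldots,s_n)\otimes b^d\in\cL$ for every $d$, all $s_i\in\cS$ and all $b\in\B$.

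Next I would single out a useful component. As $f$ is neither an identity nor a central polynomial of $\cS$, its $\cS$-values are not all central; but if every $f^{(d)}$ had all its $\cS$-values central, then so would $f$, so there is a $d_0\ge1$ for which $f^{(d_0)}$ is itself neither an identity nor a central polynomial of $\cS$. Applying hypothesis~(b) with $n=d_0$ to write an arbitrary $c\in\B$ as a linear combination of $d_0$-th powers, the display above upgrades to $f^{(d_0)}(s_1,\ldots,s_n)\otimes c\in\cL$ for all $s_i\in\cS$, $c\in\B$. Setting $L_0:=\Span f^{(d_0)}(\cS)$ — a Lie ideal of $\cS$ by Lemma~\ref{L1}, noncentral by the choice of $d_0$ — this reads $L_0\otimes\B\subseteq\cL$.

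The core of the argument is then to enlarge $L_0$ to the subalgebra $\M:=\sum_{k\ge1}L_0^k$ of $\cS$ that it generates, where $L_0^k:=\Span\{u_1\cdots u_k:u_i\in L_0\}$; a short induction with the derivation identity $[ab,s]=a[b,s]+[a,s]b$ shows that each $L_0^k$ is again a Lie ideal of $\cS$. I claim $L_0^k\otimes\B\subseteq\cL$ for every $k$, by induction, the case $k=1$ being the previous paragraph. For the step pick $u\in L_0^k$, $v\in L_0$, $p,q\in\B$ and use
\[
 uv\otimes(pq-qp)\;=\;[\,u\otimes p,\;v\otimes q\,]\;-\;[u,v]\otimes qp :
\]
the first term lies in $\cL$ since $u\otimes p\in\cL$ and $\cL$ is a Lie ideal, and the second since $[u,v]\in L_0^k$ and $L_0^k\otimes\B\subseteq\cL$ by induction. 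Hence $uv\otimes(pq-qp)\in\cL$, and as hypothesis~(a) writes every $c\in\B$ as a sum of commutators $pq-qp$, we get $uv\otimes c\in\cL$, i.e. $L_0^{k+1}\otimes\B\subseteq\cL$. Summing over $k$ yields $\M\otimes\B\subseteq\cL$. Now $\M$ is a subalgebra of $\cS$ and, being a sum of Lie ideals and containing the noncentral $L_0$, a noncentral Lie ideal; since $\Char(\FF)=0\ne2$, Lemma~\ref{L2} forces $\M=\cS$, whence every $s\otimes b$ lies in $\cL$ and $\A=\cL$.

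The step I expect to be the main obstacle is this last inductive enlargement of $L_0$ to $\M$: commuting $u\otimes p$ past $v\otimes q$ unavoidably produces the error term $[u,v]\otimes qp$ because $\B$ is noncommutative, and it is precisely the interplay of hypothesis~(a) with the inductively controlled spaces $L_0^k\otimes\B$ that lets one absorb this term and climb from the Lie ideal $L_0$ to the subalgebra it generates, after which Lemma~\ref{L2} does the rest. The remaining points are routine: that each $L_0^k$ is a Lie ideal, that the constant-term issue is vacuous in the intended applications, and that ``subalgebra'' in Lemma~\ref{L2} is meant in the non-unital sense.
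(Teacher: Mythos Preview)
Your argument is correct and follows the same high-level strategy as the paper (isolate a noncentral Lie ideal of $\cS$ that tensors with $\B$ into $\cL$, upgrade it to a subalgebra, then invoke Lemma~\ref{L2}), but the tactical choices differ. The paper first reduces to a \emph{multilinear} $f$: it strips variables not appearing in every monomial, then applies the linearization operator $\Delta_{1,\text{\textvisiblespace}}$ together with Lemma~\ref{R1} repeatedly, one variable at a time. With $f$ multilinear of degree $n$ it sets $\M=\{m\in\cS:m\otimes\B\subseteq\cL\}$, shows $\M$ is a Lie ideal via $[m,s]\otimes b^2=[m\otimes b,s\otimes b]$ and (b), gets noncentrality from $s_0\otimes b^n=f(s_1\otimes b,\ldots,s_n\otimes b)$ and (b), and closes $\M$ under products by the single identity $m^2\otimes[b,b']=[m\otimes b,m\otimes b']$ together with (a) and the polarization $m_1m_2=\tfrac12\bigl([m_1,m_2]+(m_1+m_2)^2-m_1^2-m_2^2\bigr)$. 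You bypass multilinearization entirely: one Vandermonde step on \emph{total} degree picks out a homogeneous $f^{(d_0)}$, (b) yields $L_0\otimes\B\subseteq\cL$ for $L_0=\Span f^{(d_0)}(\cS)$, and the subalgebra is reached by the induction $L_0^k\otimes\B\subseteq\cL$ using $uv\otimes[p,q]=[u\otimes p,v\otimes q]-[u,v]\otimes qp$ and (a). Your route is a bit shorter, trading the several-paragraph linearization for a single clean induction; the paper's route has the compensating elegance that once $f$ is multilinear the subalgebra step reduces to closure under squares. Either way the roles of hypotheses (a), (b) and of Lemmas~\ref{R1}--\ref{L2} are the same.
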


\begin{proof}
Let $f= f(X_1,\ldots,X_n)$. Let us write $f= g_i + h_i$ where $g_i$ is a sum of all monomials of $f$ in which $X_i$ appears and  $h_i$ is a sum of all monomials of $f$ in which $X_i$ does not appear. Thus, $h_i= h_i(X_1,\ldots,X_{i-1},X_{i+1},\ldots,X_n)$ and hence 
$$h_i(a_1,\ldots,a_{i-1},a_{i+1},\ldots,a_n) = f(a_1,\ldots,a_{i-1},0,a_{i+1},\ldots,a_n)$$ 
for all $a_i\in\A$. Therefore $\Span h_i(\A)\subseteq \Span f(\A)$, which clearly implies $\Span g_i(\A)\subseteq \Span f(\A)$. At least one of $g_i$ and $h_i$ is neither an identity nor a central polynomial of $\cS$. Therefore there is no loss of generaliy in assuming that either $X_i$ appears in every monomial of  $f$  or  $f$ does not involve $X_i$ at all. Since $f$ cannot be a constant polynomial and hence it must involve some of the $X_i$'s,  we may assume, again without loss of generality, that each monomial of $f$ involves all $X_i$, $i=1,\ldots,n$.  

Next we claim  that there is no loss of generality in assuming that $f$ is homogeneous in $X_1$.
Write $f=f_1+\ldots +f_m$, where  $f_i$ is the sum of all monomials of $f$ that have degree $i$ in $X_1$.
Note that
$$f(\lambda a_1,a_2,\ldots,a_n) =  \sum_{i=1}^m \lambda^i f_i(a_1,\ldots,a_n) \in \Span f(\A)$$
 for all $\lambda\in \FF$ and all $a_i\in \A$, so $ f_i(a_1,\ldots,a_n) \in \Span f(\A)$ by Lemma \ref{R1}. Thus, $\Span f_i(\A)\subseteq \Span f(\A)$. At least one $f_i$  is neither an identity nor a central polynomial of $\cS$. Therefore it suffices to prove the theorem for $f_i$. This proves our claim.
 
 Let us now show that there is no loss of generality in assuming that $f$ is linear in $X_1$.
 If $\deg_{X_1}f>1$, we apply the multilinearization process to $f$, i.e., we introduce a new polynomial $ \Delta_{1,n+1}f = f'(X_1,\ldots,X_n,X_{n+1})$: 
$$
f'=f\left(X_1+X_{n+1},X_2,\dots,X_n\right)-
 f\left(X_1,X_2,\dots,X_n\right)
 -f\left(X_{n+1},X_2,\dots,X_n\right).
$$
This reduces the degree in $X_1$ by one.
Clearly, $\Span f'(\A)\subseteq\Span f(\A)$. 
Observe that $f$ can be retrieved from $f'$ by 
resubstituting $X_{n+1}\mapsto X_1$, more exactly
$$
(2^{\deg_{X_1}f}-2) f= f'(X_1,\ldots,X_n,X_1).
$$
Hence $f'$ is not an identity nor a central polynomial of $\cS$.
Note however that $f'$ is not necessarily homogeneous in $X_1$, but for all
its homogeneous components $f'_i$ we have
$\Span f'_i(\A)\subseteq \Span f'(\A)$;
one can check this by
using Lemma \ref{R1}, like in the previous paragraph. 
At least one of these components, say $f_j'$,
is not an identity nor a central polynomial of $\cS$.
Thus we restrict our attention to $f_j'$.
 If necessary, we continue applying
 $\Delta_{1,\text{\textvisiblespace}}$, and after a finite number of 
 steps we obtain a polynomial $\Delta f$ linear in $X_1$,
which is neither an identity nor a central polynomial of $\cS$, and
 satisfies
 $\Span \Delta f(\A)\subseteq \Span f(\A)$. Hence we may assume
 $f$ is linear in $X_1$.
 
 Repeating the same argument with respect to other variables we finally see that without loss of generality we may assume that $f$ is multilinear.

 Set $\cL = \Span f(\A)$ and 
 $\M= \{m\in\cS\,|\, m \otimes \B\subseteq \cL\}$. By Lemma \ref{L1}, $\cL$ is a Lie ideal of $\A$. Therefore $[m,s]\otimes b^2 = [m\otimes b,s\otimes b] \in \cL$ for all $m\in\M$, $b\in \B$, $s\in\cS$. Using (b) it follows that $[m,s] \in\M$. Therefore
 $\M$ is a Lie ideal of $\cS$.
Pick $s_1,\ldots,s_n\in \cS$ such that $s_0 = f(s_1,\ldots,s_n)$ does not lie in the center of $\cS$. For every $b\in \B$ we have
$$
 s_0\otimes b^n = f(s_1\otimes b, s_2\otimes b,\ldots,s_n\otimes b) \in \cL.
$$ 
In view of (b) this yields $s_0\in \M$. Accordingly, $\M$ is a noncentral Lie ideal of $\cS$. Next, given $m\in \M$ and $b,b'\in\B$, we have 
 $$
 m^2 \otimes [b,b'] = [m\otimes b, m\otimes b'] \in\cL.
 $$
By (a), this gives $m^2\in\M$. From
  $$m_1 m_2 = \frac{1}{2}([m_1,m_2] + (m_1 + m_2)^2 - m_1^2 - m_2^2)$$
  it now follows that $\M$ is a subalgebra of $\cS$. Using Lemma \ref{L2} we now conclude that $\M = \cS$, i.e., $\A=\cS \otimes \B \subseteq \cL \subseteq\A$.
  \end{proof}
  
It is easy to see that (b) is fulfilled if $\B$ has a unity. In this case the proof can be actually slightly simplified by avoiding involving powers of elements in $\B$. Further, every $C^*$-algebra
$\B$ satisfies (b). Indeed, every element in $\B$ is a linear combination of positive elements, and for positive elements we can define  $n$th roots.

\begin{corollary}\label{thm:halmos}
Let $\cH$ be an infinite dimensional Hilbert space. Then 
$$\Span f(\bh)=\bh\quad\text{and}\quad\Span f(\kh)=\kh
$$ for every nonconstant polynomial
 $f\in\C\langle\bar X\rangle$.
\end{corollary}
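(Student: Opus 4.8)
The plan is to deduce Corollary~\ref{thm:halmos} from Theorem~\ref{T} by exhibiting suitable tensor factorizations of $\bh$ and $\kh$. The key observation is that if $\cH$ is an infinite dimensional Hilbert space, then we can write $\cH\cong \cH_1\otimes\cH_2$ with $\cH_1$ finite dimensional of dimension $d$ (as large as we wish) and $\cH_2$ again infinite dimensional; under this identification one has $\bh\cong M_d(\FF)\otimes\bh[\cH_2]$ and, similarly, $\kh\cong M_d(\FF)\otimes\kh[\cH_2]$. So in Theorem~\ref{T} we take $\cS=M_d(\FF)$, which is simple, and $\B=\bh[\cH_2]$ or $\B=\kh[\cH_2]$.

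Next I would verify that these choices of $\B$ satisfy hypotheses (a) and (b). Property (a) — every element is a sum of commutators — is exactly the classical theorem of Halmos \cite{Hal} for $\bh$ and of Pearcy and Topping \cite{PT} (or Anderson \cite{And}) for $\kh$, which is precisely the special case singled out in the introduction. Property (b) follows from the remark immediately after the proof of Theorem~\ref{T}: $\bh[\cH_2]$ has a unity, and $\kh[\cH_2]$ is a $C^*$-algebra, so in both cases every element is a linear combination of $n$th powers.

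Finally I would handle the polynomial side. Given a nonconstant $f\in\C\langle\bar X\rangle$, I must choose $d$ large enough that $f$ is neither an identity nor a central polynomial of $\cS=M_d(\FF)$. This is where a little care is needed: a nonconstant $f$ can vanish on small matrix algebras (e.g.\ $f=X_1X_2-X_2X_1$ is an identity of $M_1(\FF)$), and it can be central on somewhat larger ones. But for any fixed nonconstant $f$, once $2d>\deg f$ the results recalled from \cite{BK} in the introduction guarantee that cases (1) and (2) do not occur, so $\Span f(M_d(\FF))$ is either the trace-zero matrices or all of $M_d(\FF)$; in particular $f$ is not an identity and not a central polynomial of $M_d(\FF)$. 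Choosing such a $d$ and applying Theorem~\ref{T} gives $\Span f(\A)=\A$ with $\A=\bh$ (resp.\ $\kh$), which is the claim.

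The main obstacle is really just this last point — confirming that a nonconstant polynomial is neither an identity nor a central polynomial of $M_d(\FF)$ for $d$ sufficiently large — and it is dispatched by invoking the already-quoted consequence of \cite[Theorem~4.5]{BK}; everything else is a matter of assembling the tensor decomposition and citing the Halmos--Pearcy--Topping results.
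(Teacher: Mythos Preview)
Your proof is correct and follows essentially the same route as the paper: factor $\bh$ (resp.\ $\kh$) as $M_d(\C)\otimes\B$, verify (a) via Halmos and Pearcy--Topping and (b) via the remark after Theorem~\ref{T}, and apply the theorem. The only difference is in how $d$ is chosen: rather than invoking \cite[Theorem~4.5]{BK}, the paper observes that since no nonzero polynomial is an identity of every $M_d(\C)$ \cite[Lemma~1.4.3]{Row}, there is some $d$ for which $[f,X_{n+1}]$ is not an identity of $M_d(\C)$, and for that $d$ the polynomial $f$ is automatically neither an identity nor central.
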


\begin{proof}
It is well known that there does not exist a nonzero polynomial that was an identity of $M_d(\C)$ for every $d\ge 1$, cf.~\cite[Lemma 1.4.3]{Row}. 
Therefore there exists $d\ge 1$ such that $[f,X_{n+1}]$ is not an identity of 
$M_d(\C)$. This means that $f$ is neither an identity nor a central polynomial of $M_d(\C)$. Since $\cH$ is infinite dimensional, we have $\bh\cong M_d(\bh)\cong M_d(\C) \otimes \bh$, and similarly
$\kh\cong M_d(\C) \otimes \kh$. Now we are in a position to use  Theorem \ref{T}. Indeed,
$M_d(\C)$ is a simple algebra, and the algebras $\bh$ and $\kh$ satisfy (a) by \cite{Hal} and \cite{PT}, and they satisfy (b) by the remark preceding the statement of the corollary.
\end{proof}

\end{document}